\documentclass[pdflatex,sn-mathphys-num]{sn-jnl}

\usepackage{graphicx}
\usepackage{multirow}
\usepackage{amsmath,amssymb,amsfonts}
\usepackage{amsthm}
\usepackage{mathrsfs}
\usepackage[title]{appendix}
\usepackage{xcolor}
\usepackage{textcomp}
\usepackage{manyfoot}
\usepackage{booktabs}
\usepackage{algorithm}
\usepackage{algorithmicx}
\usepackage{algpseudocode}
\usepackage{mathtools}
\usepackage{tikz}
\usepackage{pgfplots}
\usetikzlibrary{arrows.meta,positioning,shapes.geometric}
\usepgfplotslibrary{groupplots}
\pgfplotsset{compat=1.18}
\mathtoolsset{showonlyrefs}

\theoremstyle{thmstyleone}
\newtheorem{theorem}{Theorem}
\newtheorem{proposition}[theorem]{Proposition}

\theoremstyle{thmstyletwo}
\newtheorem{remark}[theorem]{Remark}

\begin{document}

\title[Iterative Refinement for Non-Hermitian Eigendecompositions]{Iterative Refinement for Diagonalizable Non-Hermitian Eigendecompositions}

\author*[1]{\fnm{Takeshi} \sur{Terao}}
\email{takeshi.terao@aoni.waseda.jp}
\affil*[1]{\orgdiv{Research Institute for Science and Engineering}, \orgname{Waseda University}, \orgaddress{\street{3-4-1, Okubo}, \city{Shinjuku-ku}, \postcode{169-8555}, \state{Tokyo}, \country{Japan}}}

\abstract{This paper develops matrix-multiplication-based iterative refinement for diagonalizable non-Hermitian eigendecompositions. The main theory concerns simple eigenvalues and distinguishes two input regimes. In the right-only regime, where only approximate right eigenvectors and eigenvalues are available, a first-order derivation selects the update and the resulting post-update residual identity is exact, yielding a quadratic residual bound. In the left-right regime, where approximate left and right eigenvectors are both available, the computable driving matrix is an exact perturbation of the inverse-based one and the biorthogonality correction satisfies an exact Newton--Schulz-type error identity. Under a small biorthogonality error, these relations yield a local second-order estimate for the resulting $W$-method. Clustered eigenvalues are handled separately by a stabilization extension based on clusterwise re-diagonalization and suppression of intracluster corrections, whose effect is verified on controlled matrices with ill-conditioned cluster bases. The method is intended as post-processing for an already accurate eigendecomposition. The attraction region is not analyzed, and no complete theory is given for the clustered case.}

\keywords{non-Hermitian eigenvalue problem, iterative refinement, biorthogonalization, mixed precision, clustered eigenvalues}

\maketitle

\section{Introduction}\label{sec:intro}

For non-Hermitian eigenvalue problems, finite-precision errors are often amplified by nonnormality and by the lack of orthogonality of the eigenvectors \cite{wilkinson1965algebraic,stewart1990matrix,kato1995perturbation,trefethen2005spectra}. A basic question is therefore how far a previously computed eigendecomposition can be improved by iterative refinement. The present paper addresses this question in a post-processing setting: an approximate eigendecomposition is already available, and the goal is to improve it at $\mathcal{O}(n^3)$ cost through matrix-multiplication-dominant updates, without recomputing the decomposition from scratch. Throughout the theoretical sections we work in exact arithmetic; finite-precision effects are discussed only in the numerical experiments. In contrast to the real symmetric or Hermitian case, the answer depends strongly on which auxiliary information accompanies the approximate eigenpairs.

Classical references for numerical methods for nonsymmetric eigenproblems include \cite{bai2000templates,saad2011numerical,sorensen2002numerical,golub2013matrix,demmel1997applied}. Newton-type approaches for improving eigenpairs have long been known \cite{dongarra1983improving,tisseur2001newton}, and two-sided variants are standard when left eigenvectors are available \cite{hochstenbach2003twosided,freitag2015tuned}. Most of these methods, however, target one or a few eigenpairs. For dense problems in which all eigenpairs are refined simultaneously, a direct extension is usually too expensive. In the real symmetric or Hermitian case, Ogita and Aishima showed that all eigenpairs can be refined in $\mathcal{O}(n^3)$ operations by matrix-multiplication-dominant updates \cite{ogita2018iterative,ogita2019iterative}.

Recent work has broadened this matrix-multiplication-based viewpoint in several directions. For singular value decompositions, analogous refinement frameworks and accelerated variants were developed in \cite{ogita2020svd,uchino2024acceleration}. For real symmetric or Hermitian eigenproblems, the same perspective has also been adapted to selected eigenpairs and to forward-error-oriented eigenvector correction \cite{terao2024subset,terao2026subset,terao2026forward}. On the non-Hermitian side, iterative refinement of Schur decompositions has recently been studied in \cite{bujanovic2023schur}. These advances show that post-processing refinement is effective well beyond the original full symmetric setting, but they do not address diagonalizable non-Hermitian eigendecompositions with simultaneous control of right residuals and left-right biorthogonality. The present paper fills that gap.

The non-Hermitian setting introduces three difficulties that are absent from a straightforward extension of the real symmetric or Hermitian theory. First, once orthogonality is lost, the right-eigenvector correction is driven by $\widehat V^{-1}$, or by an approximation obtained from left eigenvectors. Second, when left and right eigenvectors are updated simultaneously, residual reduction must be balanced against the maintenance of biorthogonality $W^{\mathrm H}V=I$, where $V$ and $W$ denote the exact right and left eigenvector matrices. Third, for clustered eigenvalues, the denominators in the componentwise update formula become small, while the numerically meaningful object is the invariant subspace rather than each individual eigenvector \cite{kato1995perturbation,stewart1990matrix,trefethen2005spectra}.

Matrix-multiplication-based refinement is also attractive computationally. Mixed-precision algorithms and compensated matrix products have developed rapidly \cite{higham2002accuracy,buttari2008exploiting,carson2018accelerating,haidar2018harnessing,higham2022mixed,ozaki2012error,ozaki2025ozaki}, so a refinement strategy whose dominant kernels are matrix products fits naturally into current high-performance and mixed-precision workflows.

The paper has two main messages. First, iterative refinement for diagonalizable non-Hermitian eigendecompositions separates naturally into a right-only regime and a left-right regime, according to the data returned by the initial eigensolver. Second, in the simple-eigenvalue case, the left-right regime admits a local second-order theory for an implementable refinement that uses the available left eigenvectors instead of direct access to $\widehat V^{-1}$. In the sequel, the left-right refinement driven by $Y_W=\widehat W^{\mathrm H}R$ is called the \emph{$W$-method}. Clustered eigenvalues are treated separately as a stabilization extension based on clusterwise re-diagonalization rather than as a completed convergence theory.

The main contributions are as follows.
\begin{itemize}
\item A unified formulation is given for two refinement regimes, right-only and left-right, matching the data returned by the initial eigensolver.
\item In the right-only regime, the update selected by the first-order derivation is proved to satisfy an exact post-update residual identity and a quadratic residual bound.
\item In the left-right regime, the computable driving matrix is proved to be an exact perturbation of the inverse-based one, and the biorthogonality correction is proved to satisfy an exact Newton--Schulz-type error identity. These two facts yield a local second-order estimate for the $W$-method in the simple-eigenvalue case.
\item For clustered eigenvalues, a stabilization mechanism based on clusterwise re-diagonalization and suppression of intracluster corrections is identified and treated as an auxiliary extension beyond the main simple-eigenvalue theory.
\item The numerical study is organized to separate theory-validation experiments for simple eigenvalues from experiments on clustered stabilization, application-derived robustness checks, and mixed-precision cost.
\end{itemize}

The remainder of the paper is organized as follows. Section~\ref{sec:notation} introduces the notation and the exact-arithmetic refinement setting. Section~\ref{sec:simple} develops the simple-eigenvalue theory in the right-only and left-right regimes. Section~\ref{sec:cluster} presents a supplementary stabilization strategy for clustered eigenvalues. Section~\ref{sec:impl} explains the computational realization, Section~\ref{sec:experiments} presents the numerical experiments, and Section~\ref{sec:conclusion} closes the paper.

\section{Problem setting and notation}\label{sec:notation}

We consider a diagonalizable matrix $A\in\mathbb{C}^{n\times n}$ and assume that there exist a right eigenvector matrix $V$, a left eigenvector matrix $W$, and a diagonal eigenvalue matrix $D=\mathrm{diag}(\lambda_1,\dots,\lambda_n)$ such that
\begin{align}
    \label{eq:eig-triple}
    AV = VD, \qquad W^{\mathrm H}A = DW^{\mathrm H}, \qquad W^{\mathrm H}V = I.
\end{align}
The computed approximations are denoted by $\widehat V$, $\widehat W$, and $\widehat D=\mathrm{diag}(\widehat\lambda_1,\dots,\widehat\lambda_n)$. The right residual matrix is
\begin{align}
    \label{eq:right-residual}
    R := A\widehat V - \widehat V\widehat D.
\end{align}
In the left-right regime, the approximate left eigenvector matrix $\widehat W$ is understood to be scaled so that $\widehat W^{\mathrm H}\widehat V\approx I$, hence $\widehat W^{\mathrm H}$ acts as an approximation to $\widehat V^{-1}$. Section~\ref{subsec:biortho} explains how this scaling is imposed numerically before the left-right refinement is applied.
The exact relations in \eqref{eq:eig-triple} are approximated through the residual \eqref{eq:right-residual}. We write the corrections in the form
\begin{align}
    \label{eq:correction-ansatz}
    V = \widehat V(I+E), \qquad W = \widehat W(I+F).
\end{align}
Throughout the paper, first-order approximations are used only to motivate update formulas. Unless the text explicitly says that higher-order terms are neglected, the subsequent propositions and theorems are exact statements about the updates once they have been defined.
Outside Section~\ref{sec:experiments}, all algorithms and statements are interpreted in exact arithmetic. Finite-precision effects are discussed only in the numerical experiments.

\section{Updates for simple eigenvalues}\label{sec:simple}

This section first uses first-order expansions only to motivate the update formulas. Once the updates have been fixed, all propositions and the theorem below are exact algebraic statements; higher-order terms are not discarded unless this is stated explicitly.

\subsection{Right-only case: first-order derivation}\label{subsec:rightheur}

This subsection is heuristic. Assume that the eigenvalues are simple, recall from \eqref{eq:eig-triple} that the exact right eigenvector matrix satisfies $AV=VD$, and write
\begin{align}
    \label{eq:right-heuristic-V}
    V=\widehat V(I+E).
\end{align}
Then
\begin{align}
    \label{eq:right-heuristic-similarity}
    \widehat V^{-1}A\widehat V = (I+E)D(I+E)^{-1}.
\end{align}
To select an update, we discard the higher-order contributions coming from $(I+E)^{-1}-(I-E)$ and from $EDE$. We denote the resulting first-order proxies by $\widetilde D$ and $\widetilde E$, and define the driving matrix
\begin{align}
    \label{eq:right-driving}
    Y := \widehat V^{-1}(A\widehat V-\widehat V\widehat D)=\widehat V^{-1}R.
\end{align}
The formal first-order model is then written without approximation signs as
\begin{align}
    \label{eq:right-heuristic-model}
    Y = (\widetilde D-\widehat D)+\widetilde E\widetilde D-\widetilde D\widetilde E.
\end{align}
Equation \eqref{eq:right-heuristic-model} is not an exact relation for the true correction $E$ in \eqref{eq:right-heuristic-V}; rather, it defines the first-order proxy pair $(\widetilde D,\widetilde E)$ obtained after discarding the higher-order terms. Comparing diagonal and off-diagonal entries in \eqref{eq:right-heuristic-model} gives
\begin{align}
    \label{eq:right-heuristic-shifted}
    \widetilde D := \widehat D+\mathrm{diag}(Y).
\end{align}
The corresponding first-order correction matrix satisfies
\begin{align}
    \label{eq:right-heuristic-correction}
    \widetilde e_{ij} = \frac{y_{ij}}{\widetilde d_j-\widetilde d_i} \quad (i\neq j), \qquad \widetilde e_{ii}=0.
\end{align}
Algorithm~\ref{alg:right} uses the same componentwise formula as \eqref{eq:right-heuristic-correction}, but what is actually computed there is not the true correction $E$ itself. Rather, the algorithm computes the first-order proxy $\widetilde E=[\widetilde e_{ij}]$ selected by the model above. The chain \eqref{eq:right-heuristic-similarity}--\eqref{eq:right-heuristic-correction} serves only to select this update. The next subsection keeps this motivation separate from the formal result: once $\widetilde E$ has been defined by the algorithm, the post-update residual identity and the resulting quadratic estimate are exact.

\subsection{Exact residual identity in the right-only case}\label{subsec:rightexact}

\begin{proposition}[Exact residual identity and quadratic estimate in the right-only case]\label{prop:rightexact}
\emph{Setup.}
Let $A\in\mathbb{C}^{n\times n}$, let $\widehat V\in\mathbb{C}^{n\times n}$ be invertible, and let $\widehat D=\mathrm{diag}(\widehat d_1,\dots,\widehat d_n)$ be diagonal. Assume that Algorithm~\ref{alg:right} is executed in exact arithmetic and that the simple-eigenvalue branch is taken. Define
\begin{align}
    \label{eq:right-prop-defs}
    R:=A\widehat V-\widehat V\widehat D,\qquad Y:=\widehat V^{-1}R,\qquad Y_{\mathrm d}:=\mathrm{diag}(y_{11},\dots,y_{nn}),\qquad \widetilde D:=\widehat D+\mathrm{diag}(Y).
\end{align}
Write $\widetilde D=\mathrm{diag}(\widetilde d_1,\dots,\widetilde d_n)$, and let $\widetilde E=[\widetilde e_{ij}]$ denote the first-order correction matrix computed by the algorithm.

\emph{Claim.}
Because the simple-eigenvalue branch is taken,
\begin{align}
    \label{eq:right-prop-sep}
    \mathrm{sep}(\widetilde D):=\min_{i\neq j}|\widetilde d_i-\widetilde d_j|>0.
\end{align}
The updated residual satisfies the exact identity
\begin{align}
    \label{eq:right-prop-identity}
    \widehat V^{-1}(A\widetilde V-\widetilde V\widetilde D) = Y\widetilde E-Y_{\mathrm d}\widetilde E,
\end{align}
and therefore
\begin{align}
    \label{eq:right-prop-bound}
    \|\widehat V^{-1}(A\widetilde V-\widetilde V\widetilde D)\|_F
    \leq \frac{2\|Y\|_F^2}{\mathrm{sep}(\widetilde D)}.
\end{align}
Thus the updated residual measured in the $\widehat V^{-1}$-scaled norm is of second order in $\|Y\|_F$ once the driving matrix \eqref{eq:right-prop-defs} is small.
\end{proposition}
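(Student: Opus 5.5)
The plan is to assume, as the algorithm does, that the update is $\widetilde V:=\widehat V(I+\widetilde E)$ with $\widetilde E$ given componentwise by \eqref{eq:right-heuristic-correction}. The separation claim \eqref{eq:right-prop-sep} is simply the branch condition: the simple-eigenvalue branch is entered only when the $\widetilde d_i$ are pairwise distinct. This is exactly what makes the divisions in \eqref{eq:right-heuristic-correction} well defined. I would record this first, since every later step uses the denominators $\widetilde d_j-\widetilde d_i$.

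For the identity \eqref{eq:right-prop-identity}, I will expand the scaled residual directly. Write $\widehat V^{-1}A\widehat V=\widehat D+Y$, which is exact by the definition of $Y$. Then
\begin{align}
\widehat V^{-1}(A\widetilde V-\widetilde V\widetilde D)
&=(\widehat D+Y)(I+\widetilde E)-(I+\widetilde E)\widetilde D\\
&=(\widehat D+Y-\widetilde D)+\bigl(\widehat D\widetilde E-\widetilde E\widetilde D\bigr)+Y\widetilde E .
\end{align}
Next substitute $\widehat D=\widetilde D-Y_{\mathrm d}$. The first bracket becomes $Y-Y_{\mathrm d}$, the off-diagonal part of $Y$. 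The second bracket becomes $\widetilde D\widetilde E-\widetilde E\widetilde D-Y_{\mathrm d}\widetilde E$.

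The key observation is that $\widetilde D\widetilde E-\widetilde E\widetilde D$ has $(i,j)$ entry $(\widetilde d_i-\widetilde d_j)\widetilde e_{ij}$. This equals $-y_{ij}$ off the diagonal and $0$ on it, so it equals $-(Y-Y_{\mathrm d})$. The first-order terms therefore cancel exactly, leaving $Y\widetilde E-Y_{\mathrm d}\widetilde E$. The main thing to be careful about is the sign convention in \eqref{eq:right-heuristic-correction}: one must check that $\widetilde e_{ij}=y_{ij}/(\widetilde d_j-\widetilde d_i)$ produces cancellation rather than doubling. With the convention above, it cancels.

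For the bound \eqref{eq:right-prop-bound}, write $Y\widetilde E-Y_{\mathrm d}\widetilde E=(Y-Y_{\mathrm d})\widetilde E$. Two estimates are needed. First, $\|Y-Y_{\mathrm d}\|_F\le\|Y\|_F$, because it is the off-diagonal part of $Y$. Second, $\|\widetilde E\|_F\le\|Y\|_F/\mathrm{sep}(\widetilde D)$, because entrywise $|\widetilde e_{ij}|\le|y_{ij}|/\mathrm{sep}(\widetilde D)$ and the diagonal of $\widetilde E$ vanishes. Submultiplicativity of the Frobenius norm then gives the constant $1$, which is stronger than the stated constant $2$. The stated constant also follows directly from the triangle inequality together with $\|Y_{\mathrm d}\|_F\le\|Y\|_F$, so either route suffices. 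No step is a genuine obstacle: the whole proof is the exact commutator cancellation.
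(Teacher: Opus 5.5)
Your proof is correct and follows the paper's argument. You use the same expansion via $\widehat V^{-1}A\widehat V=\widehat D+Y$ and the same exact cancellation of the first-order terms; you carry out the cancellation in matrix form through $\widetilde D\widetilde E-\widetilde E\widetilde D=-(Y-Y_{\mathrm d})$, whereas the paper does it entrywise. Factoring the result as $(Y-Y_{\mathrm d})\widetilde E$ does give the constant $1$, which is slightly sharper than the constant $2$ the paper obtains from the triangle inequality.
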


\begin{proof}
Since $A\widehat V=\widehat V(\widehat D+Y)$, we have
\begin{align}
    \label{eq:right-proof-expand}
    A\widetilde V - \widetilde V\widetilde D = \widehat V\{(\widehat D+Y)(I+\widetilde E)-(I+\widetilde E)\widetilde D\}.
\end{align}
Hence
\begin{align}
    \label{eq:right-proof-rearrange}
    \widehat V^{-1}(A\widetilde V-\widetilde V\widetilde D)=(Y-Y_{\mathrm d})+\widehat D \widetilde E-\widetilde E\widetilde D+Y\widetilde E.
\end{align}
For $i\neq j$,
\begin{align}
    \label{eq:right-proof-entrywise}
    \bigl((Y-Y_{\mathrm d})+\widehat D \widetilde E-\widetilde E\widetilde D\bigr)_{ij}
    = y_{ij}+(\widehat d_i-\widetilde d_j)\widetilde e_{ij}
    = -y_{ii}\widetilde e_{ij},
\end{align}
and the diagonal entries vanish because $\widetilde e_{ii}=0$. Therefore
\begin{align}
    \label{eq:right-proof-identity}
    \widehat V^{-1}(A\widetilde V-\widetilde V\widetilde D) = Y\widetilde E-Y_{\mathrm d}\widetilde E.
\end{align}
Moreover,
\begin{align}
    \label{eq:right-proof-Ebound}
    \|\widetilde E\|_F \le \frac{\|Y\|_F}{\mathrm{sep}(\widetilde D)},
\end{align}
which implies
\begin{align}
    \|Y\widetilde E-Y_{\mathrm d}\widetilde E\|_F
    \le \|Y\widetilde E\|_F+\|Y_{\mathrm d}\widetilde E\|_F
    \le 2\|Y\|_F\|\widetilde E\|_F
    \le \frac{2\|Y\|_F^2}{\mathrm{sep}(\widetilde D)}.
\end{align}
Starting from \eqref{eq:right-proof-expand}, rearranging terms gives \eqref{eq:right-proof-rearrange}. The entrywise cancellation in \eqref{eq:right-proof-entrywise} reduces \eqref{eq:right-proof-rearrange} to the exact residual identity \eqref{eq:right-prop-identity}. Finally, \eqref{eq:right-proof-Ebound} yields the quadratic estimate \eqref{eq:right-prop-bound}.
\end{proof}

In the exact-arithmetic analysis, Algorithm~\ref{alg:right} uses the driving matrix \eqref{eq:right-driving}. Section~\ref{sec:experiments} later compares several finite-precision realizations of this operation.

\begin{algorithm}
\caption{Iterative refinement in the right-only regime (simple eigenvalues)}\label{alg:right}
\begin{algorithmic}[1]
\Require $A\in\mathbb{C}^{n\times n}$, invertible $\widehat V\in\mathbb{C}^{n\times n}$, diagonal $\widehat D=\mathrm{diag}(\widehat d_1,\dots,\widehat d_n)$
\Ensure Updated pair $(\widetilde V,\widetilde D)$ in the simple-eigenvalue regime; if the updated diagonal entries are not sufficiently separated, switch to the cluster treatment of Section~\ref{sec:cluster}
\State $R \gets A\widehat V-\widehat V\widehat D$
\State $Y \gets \widehat V^{-1}R$
\State $\widetilde d_i \gets \widehat d_i+y_{ii}$ for $i=1,\dots,n$
\If{$\min_{i\neq j}|\widetilde d_i-\widetilde d_j|$ is below the chosen threshold}
\State Switch to the cluster treatment of Section~\ref{sec:cluster}
\EndIf
\For{$i=1,\dots,n$}
\For{$j=1,\dots,n$}
\If{$i=j$}
\State $\widetilde e_{ij}\gets 0$
\Else
\State $\widetilde e_{ij}\gets y_{ij}/(\widetilde d_j-\widetilde d_i)$
\EndIf
\EndFor
\EndFor
\State $\widetilde E\gets [\widetilde e_{ij}]$
\State $\widetilde V\gets \widehat V(I+\widetilde E)$
\State $\widetilde D\gets \mathrm{diag}(\widetilde d_1,\dots,\widetilde d_n)$
\State \Return $(\widetilde V,\widetilde D)$
\end{algorithmic}
\end{algorithm}

\subsection{Exact perturbation formula for the driving matrix}\label{subsec:driving}

When both $\widehat V$ and $\widehat W$ are available, the left-right refinement uses the implementable driving matrix
\begin{align}
    \label{eq:yw-def}
    Y_W := \widehat W^{\mathrm H}(A\widehat V-\widehat V\widehat D)
\end{align}
in place of the ideal but usually unavailable quantity
\begin{align}
    \label{eq:ystar-def}
    Y_\star := \widehat V^{-1}(A\widehat V-\widehat V\widehat D).
\end{align}
For brevity, we call Algorithm~\ref{alg:both} the \emph{$W$-method}, since it builds the right correction from the available left eigenvector matrix through \eqref{eq:yw-def}. Two issues must then be analyzed separately: how well $\widehat W^{\mathrm H}$ approximates $\widehat V^{-1}$ in the passage from \eqref{eq:ystar-def} to \eqref{eq:yw-def}, and how the left update should be chosen to maintain biorthogonality.

\begin{proposition}[Exact perturbation representation of $Y_W$]\label{prop:driving}
\emph{Setup.}
Let $A\in\mathbb{C}^{n\times n}$, let $\widehat V\in\mathbb{C}^{n\times n}$ be invertible, let $\widehat W\in\mathbb{C}^{n\times n}$ be arbitrary, and let $\widehat D$ be diagonal. Define
\begin{align}
    \label{eq:driving-prop-defs}
    R:=A\widehat V-\widehat V\widehat D,\qquad
    \Delta:=\widehat W^{\mathrm H}\widehat V-I,\qquad
    Y_\star:=\widehat V^{-1}R,\qquad
    Y_W:=\widehat W^{\mathrm H}R.
\end{align}

\emph{Claim.}
Then
\begin{align}
    \label{eq:driving-identity}
    Y_W-Y_\star=\Delta Y_\star
\end{align}
holds exactly. Hence, for any subordinate norm,
\begin{align}
    \label{eq:driving-bound}
    \|Y_W-Y_\star\|\le \|\Delta\|\,\|Y_\star\|.
\end{align}
Therefore, the error in the implementable driving matrix is first order in the biorthogonality error $\Delta$.
\end{proposition}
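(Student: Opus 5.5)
The identity \eqref{eq:driving-identity} follows from a one-line algebraic manipulation, so the plan is to derive it directly from the definitions in \eqref{eq:driving-prop-defs}. Since $\widehat V$ is invertible, I will first rewrite the residual as $R=\widehat V Y_\star$, which is just the definition $Y_\star=\widehat V^{-1}R$ multiplied on the left by $\widehat V$. Substituting this into the definition of $Y_W$ gives
\begin{align}
    Y_W=\widehat W^{\mathrm H}R=\widehat W^{\mathrm H}\widehat V\,Y_\star=(I+\Delta)Y_\star ,
\end{align}
using $\widehat W^{\mathrm H}\widehat V=I+\Delta$ from the definition of $\Delta$. Subtracting $Y_\star$ from both sides yields $Y_W-Y_\star=\Delta Y_\star$ exactly. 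No assumption on $\widehat W$, on the diagonal structure of $\widehat D$, or on the separation of eigenvalues is needed, which matches the generality of the setup.

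For the bound \eqref{eq:driving-bound}, I will take any subordinate (operator) norm and use its submultiplicativity, $\|\Delta Y_\star\|\le\|\Delta\|\,\|Y_\star\|$. I will also remark that the same inequality holds for the Frobenius norm, since it is submultiplicative as well. This matters because Proposition~\ref{prop:rightexact} is stated in the Frobenius norm, and the later second-order estimate for the $W$-method will presumably combine the two results.

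Finally, I will interpret the result. When $\|\Delta\|$ is small, $Y_W$ differs from the ideal $Y_\star$ by a term of size $\|\Delta\|\,\|Y_\star\|$, which is a product of two small quantities. This is the sense in which the error is first order in $\Delta$. There is no real obstacle in this proof. The only point requiring care is the order of the factors: $\Delta$ must multiply from the left, which comes from writing $R=\widehat V Y_\star$ rather than factoring $\widehat V$ out on the right.
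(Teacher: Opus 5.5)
Your proof is correct and is essentially the paper's argument: the paper writes $Y_W-Y_\star=(\widehat W^{\mathrm H}-\widehat V^{-1})R=(\widehat W^{\mathrm H}\widehat V-I)\widehat V^{-1}R=\Delta Y_\star$, which is the same factorization $R=\widehat V Y_\star$ you use, and then gets the bound by submultiplicativity. Your aside about Frobenius norms is also sound; note that Theorem~\ref{thm:wlocal} actually uses the slightly sharper mixed inequality $\|\Delta Y_\star\|_F\le\|\Delta\|_2\|Y_\star\|_F$, which holds as well.
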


\begin{proof}
\begin{align}
    Y_W-Y_\star
    &= (\widehat W^{\mathrm H}-\widehat V^{-1})R \\
    &= (\widehat W^{\mathrm H}\widehat V-I)\widehat V^{-1}R \\
    &= \Delta Y_\star.
\end{align}
The exact identity \eqref{eq:driving-identity} follows immediately, and \eqref{eq:driving-bound} is its norm estimate.
\end{proof}

\subsection{Exact update for the biorthogonality error}\label{subsec:newton}

At the heuristic level, if $Z:=\widehat W^{\mathrm H}\widehat V$ and $\widetilde E=0$, then one Newton--Schulz step for $Z^{-1}$ starting from $X=I$ gives $X_1=2I-Z$. Hence $F_0^{\mathrm H}=I-Z$ corresponds to a single Newton--Schulz-type correction. The next proposition records the exact algebraic identity obtained when this idea is combined with a nonzero right update $\widetilde E$.

\begin{proposition}[Exact update formula for the biorthogonality error]\label{prop:newton}
\emph{Setup.}
Let $\widehat V\in\mathbb{C}^{n\times n}$ be invertible and let $\widehat W\in\mathbb{C}^{n\times n}$ be arbitrary. Define
\begin{align}
    \label{eq:newton-defect}
    \Delta:=\widehat W^{\mathrm H}\widehat V-I.
\end{align}
For any right update matrix $\widetilde E\in\mathbb{C}^{n\times n}$, define the left update by
\begin{align}
    \label{eq:newton-left}
    F^{\mathrm H}:=I-\widehat W^{\mathrm H}\widehat V-\widetilde E = -\Delta-\widetilde E,
\end{align}
and set
\begin{align}
    \label{eq:newton-updates}
    \widetilde V := \widehat V(I+\widetilde E),\qquad \widetilde W := \widehat W(I+F).
\end{align}
Then the updated biorthogonality error
\begin{align}
    \label{eq:newton-updated-defect}
    \widetilde\Delta := \widetilde W^{\mathrm H}\widetilde V-I
\end{align}

\emph{Claim.}
The updated biorthogonality error \eqref{eq:newton-updated-defect} satisfies the exact identity
\begin{align}
    \label{eq:newton-identity}
    \widetilde\Delta = -\widetilde E^2-\widetilde E\Delta-\Delta^2-\Delta^2\widetilde E-\widetilde E\Delta\widetilde E.
\end{align}
Consequently,
\begin{align}
    \label{eq:newton-bound}
    \|\widetilde\Delta\| \le (\|\widetilde E\|+\|\Delta\|)^2+(\|\widetilde E\|+\|\Delta\|)^3.
\end{align}
In particular, if $\widetilde E=0$, then
\begin{align}
    \label{eq:newton-special}
    \widetilde W^{\mathrm H}\widehat V = I-\Delta^2,
\end{align}
so the left update \eqref{eq:newton-left} acts as a Newton--Schulz-type quadratic correction for the biorthogonality error.
\end{proposition}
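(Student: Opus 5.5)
The plan is a direct expansion of the product $\widetilde W^{\mathrm H}\widetilde V$. No earlier result is needed beyond the definitions \eqref{eq:newton-defect}--\eqref{eq:newton-updates}. The key observation is that $\widehat W^{\mathrm H}\widehat V=I+\Delta$ and that, by \eqref{eq:newton-left}, $(I+F)^{\mathrm H}=I+F^{\mathrm H}=I-\Delta-\widetilde E$. Hence
\begin{align}
    \widetilde W^{\mathrm H}\widetilde V=(I-\Delta-\widetilde E)(I+\Delta)(I+\widetilde E).
\end{align}

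I would multiply out in two stages, keeping the factor order throughout since $\Delta$ and $\widetilde E$ do not commute. First,
\begin{align}
    (I-\Delta-\widetilde E)(I+\Delta)=I-\Delta^2-\widetilde E-\widetilde E\Delta.
\end{align}
Here the linear $\Delta$ terms cancel; this cancellation is the Newton--Schulz mechanism. Multiplying on the right by $(I+\widetilde E)$ gives
\begin{align}
    I-\Delta^2-\widetilde E-\widetilde E\Delta+\widetilde E-\Delta^2\widetilde E-\widetilde E^2-\widetilde E\Delta\widetilde E.
\end{align}
The linear $\widetilde E$ terms now cancel as well. Subtracting $I$ yields exactly \eqref{eq:newton-identity}.

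For the bound \eqref{eq:newton-bound}, I would work in a submultiplicative norm, for example any subordinate norm as in Proposition~\ref{prop:driving}. Write $e:=\|\widetilde E\|$ and $d:=\|\Delta\|$. The triangle inequality and submultiplicativity applied to the five terms give
\begin{align}
    \|\widetilde\Delta\|\le e^2+ed+d^2+d^2e+e^2d.
\end{align}
The quadratic part is bounded because $e^2+ed+d^2\le (e+d)^2$. The cubic part is bounded because $d^2e+e^2d=ed(e+d)\le (e+d)^3$.

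The special case follows by setting $\widetilde E=0$ in \eqref{eq:newton-identity}. Then $\widetilde V=\widehat V$ and $\widetilde\Delta=-\Delta^2$, which is \eqref{eq:newton-special}.

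There is no genuine obstacle here. The only point requiring care is to preserve the noncommutative order of the factors, so that the terms come out as $\widetilde E\Delta$ and $\Delta^2\widetilde E$ rather than in some other order. The other point is to state explicitly that the norm must be submultiplicative for \eqref{eq:newton-bound}.
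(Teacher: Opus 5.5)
Your proof is correct and follows the paper's argument: both write $\widetilde W^{\mathrm H}\widetilde V=(I-\Delta-\widetilde E)(I+\Delta)(I+\widetilde E)$, expand it keeping the noncommutative factor order, and bound the five resulting terms by the triangle inequality and submultiplicativity. The only cosmetic difference is that the paper checks the case $\widetilde E=0$ directly via $(2I-\widehat W^{\mathrm H}\widehat V)\widehat W^{\mathrm H}\widehat V=I-\Delta^2$, whereas you read it off from the general identity, which is equally valid.
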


\begin{proof}
If $\widetilde E=0$, then $(I+F)^{\mathrm H}=I-\Delta=2I-\widehat W^{\mathrm H}\widehat V$, and therefore
\begin{align}
    \widetilde W^{\mathrm H}\widehat V
    = (I+F)^{\mathrm H}\widehat W^{\mathrm H}\widehat V
    = (2I-\widehat W^{\mathrm H}\widehat V)\widehat W^{\mathrm H}\widehat V
    = I-\Delta^2.
\end{align}
For a general $\widetilde E$, we have $(I+F)^{\mathrm H}=I-\Delta-\widetilde E$, hence
\begin{align}
    \widetilde W^{\mathrm H}\widetilde V
    &= (I+F)^{\mathrm H}\widehat W^{\mathrm H}\widehat V(I+\widetilde E) \\
    &= (I-\Delta-\widetilde E)(I+\Delta)(I+\widetilde E) \\
    &= I-\widetilde E^2-\widetilde E\Delta-\Delta^2-\Delta^2\widetilde E-\widetilde E\Delta\widetilde E.
\end{align}
This yields the exact error update \eqref{eq:newton-identity}, and the norm bound \eqref{eq:newton-bound} follows from the triangle inequality and submultiplicativity. The case $\widetilde E=0$ gives \eqref{eq:newton-special}.
\end{proof}

\subsection{Local estimate for the $W$-method}\label{subsec:wlocal}

The following theorem combines the exact perturbation formula \eqref{eq:driving-identity} with the exact error update \eqref{eq:newton-identity}. Locality enters through the separation condition \eqref{eq:wlocal-locality}. No truncation of higher-order terms is used.

\begin{theorem}[Local quadratic-type estimate for the $W$-method in the simple-eigenvalue case]\label{thm:wlocal}
\emph{Setup.}
Let $A\in\mathbb{C}^{n\times n}$, let $\widehat V\in\mathbb{C}^{n\times n}$ be invertible, let $\widehat W\in\mathbb{C}^{n\times n}$ be arbitrary, and let $\widehat D$ be diagonal. Define
\begin{align}
    \label{eq:wlocal-data}
    R:=A\widehat V-\widehat V\widehat D,\qquad
    \Delta:=\widehat W^{\mathrm H}\widehat V-I,\qquad
    Y_\star:=\widehat V^{-1}R,\qquad
    Y_W:=\widehat W^{\mathrm H}R.
\end{align}
Set
\begin{align}
    \label{eq:wlocal-shifts}
    \widetilde D_\star := \widehat D+\mathrm{diag}(Y_\star),\qquad
    \widetilde D := \widehat D+\mathrm{diag}(Y_W),
\end{align}
and write
\begin{align}
    \widetilde D_\star=\mathrm{diag}(\widetilde d_{\star,1},\dots,\widetilde d_{\star,n}),\qquad
    \widetilde D=\mathrm{diag}(\widetilde d_1,\dots,\widetilde d_n).
\end{align}
Define
\begin{align}
    \label{eq:wlocal-locality}
    \gamma:=\mathrm{sep}(\widetilde D_\star)>0,\qquad
    \eta:=\|Y_W-Y_\star\|_F.
\end{align}
Assume the local condition $\eta\le \gamma/4$. Let $\widetilde E_\star=[\widetilde e_{ij}^{(\star)}]$ be defined by
\begin{align}
    \label{eq:wlocal-estar}
    \widetilde e_{ii}^{(\star)} = 0,\qquad
    \widetilde e_{ij}^{(\star)} = \frac{(Y_\star)_{ij}}{\widetilde d_{\star,j}-\widetilde d_{\star,i}},
\end{align}
for $i\neq j$. Finally, let $(\widetilde V,\widetilde W,\widetilde D)$ denote the output of Algorithm~\ref{alg:both} executed in exact arithmetic and in the simple-eigenvalue branch, and let $\widetilde E_W=[\widetilde e_{ij}^{(W)}]$ denote the first-order right-update matrix computed by the algorithm, that is,
\begin{align}
    \label{eq:wlocal-ew}
    \widetilde e_{ii}^{(W)} = 0,\qquad
    \widetilde e_{ij}^{(W)} = \frac{(Y_W)_{ij}}{\widetilde d_j-\widetilde d_i}
\end{align}
for $i\neq j$.

\emph{Claims.}
Then:
\begin{enumerate}
\item $\mathrm{sep}(\widetilde D)\ge \gamma/2$.
\item
\begin{align}
    \label{eq:wlocal-ediff}
    \|\widetilde E_W-\widetilde E_\star\|_F
    \le
    \left(\frac{2}{\gamma}+\frac{4\|Y_\star\|_F}{\gamma^2}\right)\eta.
\end{align}
\item
\begin{align}
    \label{eq:wlocal-residual}
    \|\widehat V^{-1}(A\widetilde V-\widetilde V\widetilde D)\|_F
    \le
    \eta+\frac{2(\|Y_\star\|_F+\eta)^2}{\gamma}.
\end{align}
\item
\begin{align}
    \label{eq:wlocal-defect}
    \|\widetilde W^{\mathrm H}\widetilde V-I\|
    \le
    (\|\widetilde E_W\|+\|\Delta\|)^2+(\|\widetilde E_W\|+\|\Delta\|)^3.
\end{align}
\end{enumerate}
\emph{Interpretation.}
If $\|\Delta\|_2=\mathcal{O}(\|Y_\star\|_F)$, then both the updated right residual in \eqref{eq:wlocal-residual} and the updated biorthogonality error in \eqref{eq:wlocal-defect} are $\mathcal{O}(\|Y_\star\|_F^2)$.
\end{theorem}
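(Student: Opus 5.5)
The plan is to treat the four claims in order. Claims~1 and~2 are entrywise perturbation arguments comparing the $Y_W$-based quantities with the $Y_\star$-based ones, using only $\eta=\|Y_W-Y_\star\|_F$. Claim~3 repeats the algebra in the proof of Proposition~\ref{prop:rightexact}, but with $Y_\star$ driving the exact residual and $Y_W$ driving the update. Claim~4 is Proposition~\ref{prop:newton} applied with $\widetilde E=\widetilde E_W$. The interpretation then follows by combining these with Proposition~\ref{prop:driving}.

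For Claim~1, note that $\widetilde d_i-\widetilde d_{\star,i}=(Y_W-Y_\star)_{ii}$, so $|\widetilde d_i-\widetilde d_{\star,i}|\le\eta$. Hence $|\widetilde d_i-\widetilde d_j|\ge\gamma-2\eta\ge\gamma/2$.

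For Claim~2, write $\delta^W_{ij}:=\widetilde d_j-\widetilde d_i$ and $\delta^\star_{ij}:=\widetilde d_{\star,j}-\widetilde d_{\star,i}$. For $i\ne j$ we split
\begin{align}
\widetilde e^{(W)}_{ij}-\widetilde e^{(\star)}_{ij}
=\frac{(Y_W-Y_\star)_{ij}}{\delta^W_{ij}}
+(Y_\star)_{ij}\,\frac{\delta^\star_{ij}-\delta^W_{ij}}{\delta^W_{ij}\delta^\star_{ij}} .
\end{align}
By Claim~1, $|\delta^W_{ij}|\ge\gamma/2$, and also $|\delta^\star_{ij}-\delta^W_{ij}|\le 2\max_k|(Y_W-Y_\star)_{kk}|\le 2\eta$. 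Taking Frobenius norms term by term gives $2\eta/\gamma+4\eta\|Y_\star\|_F/\gamma^2$, which is \eqref{eq:wlocal-ediff}.

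For Claim~3, substitute $A\widehat V=\widehat V(\widehat D+Y_\star)$ and $\widetilde V=\widehat V(I+\widetilde E_W)$. This gives
\begin{align}
\widehat V^{-1}(A\widetilde V-\widetilde V\widetilde D)=(Y_\star-\mathrm{diag}(Y_W))+\widehat D\widetilde E_W-\widetilde E_W\widetilde D+Y_\star\widetilde E_W .
\end{align}
Using $\widetilde e^{(W)}_{ij}(\widetilde d_j-\widetilde d_i)=(Y_W)_{ij}$ entrywise, exactly as in \eqref{eq:right-proof-entrywise}, the right-hand side reduces to the exact identity
\begin{align}
\widehat V^{-1}(A\widetilde V-\widetilde V\widetilde D)=(Y_\star-Y_W)+Y_\star\widetilde E_W-\mathrm{diag}(Y_W)\widetilde E_W .
\end{align}
It is convenient to regroup this as $(Y_\star-Y_W)(I+\widetilde E_W)+(Y_W-\mathrm{diag}(Y_W))\widetilde E_W$. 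The second piece is bounded by $\|Y_W\|_F^2/\mathrm{sep}(\widetilde D)\le 2(\|Y_\star\|_F+\eta)^2/\gamma$, using Claim~1 and the bound $\|\widetilde E_W\|_F\le\|Y_W\|_F/\mathrm{sep}(\widetilde D)$. The first piece contributes $\eta$.

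The main obstacle is matching the stated constants in \eqref{eq:wlocal-residual}. The regrouping above produces an extra cross term $\eta\|\widetilde E_W\|_F\le 2\eta(\|Y_\star\|_F+\eta)/\gamma$. A naive triangle inequality on $Y_\star\widetilde E_W-\mathrm{diag}(Y_W)\widetilde E_W$ instead loses a factor of~$2$. I would first try to absorb the cross term by a finer entrywise estimate, exploiting that $\widetilde e^{(W)}_{ii}=0$ so that only off-diagonal products appear. If that fails, I would state the bound with this cross term, which is still $\mathcal{O}(\eta\|Y_\star\|_F)$ and does not affect the interpretation.

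For Claim~4, the left update of Algorithm~\ref{alg:both} is $F^{\mathrm H}=-\Delta-\widetilde E_W$. Applying \eqref{eq:newton-bound} of Proposition~\ref{prop:newton} with $\widetilde E=\widetilde E_W$ gives \eqref{eq:wlocal-defect} directly.

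For the interpretation, Proposition~\ref{prop:driving} gives $\eta\le\|\Delta\|\,\|Y_\star\|_F$. Combining this with $\|\widetilde E_W\|\le 2(\|Y_\star\|_F+\eta)/\gamma$ makes every term in Claims~3 and~4 of order $\mathcal{O}(\|Y_\star\|_F^2)$, provided $\|\Delta\|_2=\mathcal{O}(\|Y_\star\|_F)$.
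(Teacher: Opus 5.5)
Your Claims~1, 2 and~4 and your interpretation match the paper's proof. Your exact identity for Claim~3,
\[
\widehat V^{-1}(A\widetilde V-\widetilde V\widetilde D)=(Y_\star-Y_W)+\bigl(Y_\star-\mathrm{diag}(Y_W)\bigr)\widetilde E_W,
\]
is also the one the paper uses. The gap is the final estimate for Claim~3. Your regrouping only yields $\eta+2\eta(\|Y_\star\|_F+\eta)/\gamma+2(\|Y_\star\|_F+\eta)^2/\gamma$. Stating that weaker bound as a fallback does not prove \eqref{eq:wlocal-residual}. No finer entrywise estimate is needed: the loss comes only from how you split the matrix multiplying $\widetilde E_W$.

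Keep the two $\widetilde E_W$ terms together and write the multiplier as
\[
Y_\star-\mathrm{diag}(Y_W)=\bigl(Y_\star-\mathrm{diag}(Y_\star)\bigr)+\mathrm{diag}(Y_\star-Y_W).
\]
That is, it has the off-diagonal entries of $Y_\star$ and diagonal entries $(Y_\star-Y_W)_{ii}$. Hence $\|Y_\star-\mathrm{diag}(Y_W)\|_F\le\|Y_\star\|_F+\eta$; in fact it is at most $(\|Y_\star\|_F^2+\eta^2)^{1/2}$. Now use submultiplicativity of the Frobenius norm together with $\|\widetilde E_W\|_F\le\|Y_W\|_F/\mathrm{sep}(\widetilde D)\le 2(\|Y_\star\|_F+\eta)/\gamma$. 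This gives
\[
\|\widehat V^{-1}(A\widetilde V-\widetilde V\widetilde D)\|_F\le\eta+(\|Y_\star\|_F+\eta)\cdot\frac{2(\|Y_\star\|_F+\eta)}{\gamma},
\]
which is exactly \eqref{eq:wlocal-residual}.

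Your split of the same multiplier was $(Y_\star-Y_W)+\bigl(Y_W-\mathrm{diag}(Y_W)\bigr)$. That charges the full perturbation $Y_\star-Y_W$, off-diagonal part included, and then charges $\eta$ again through $\|Y_W\|_F\le\|Y_\star\|_F+\eta$. Counting $\eta$ twice is the source of your extra cross term.
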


\begin{proof}
By Proposition~\ref{prop:driving},
\begin{align}
    \label{eq:wlocal-eta-bound}
    \eta = \|Y_W-Y_\star\|_F \le \|\Delta\|_2\|Y_\star\|_F.
\end{align}
For each diagonal entry,
\begin{align}
    \label{eq:wlocal-diagdiff}
    |\widetilde d_i-\widetilde d_{\star,i}|
    = |(Y_W-Y_\star)_{ii}| \le \eta.
\end{align}
Hence
\begin{align}
    \label{eq:wlocal-sepstep}
    |\widetilde d_j-\widetilde d_i|
    \ge |\widetilde d_{\star,j}-\widetilde d_{\star,i}|-2\eta
    \ge \gamma-2\eta
    \ge \frac{\gamma}{2},
\end{align}
which proves (1).

For $i\neq j$,
\begin{align}
    \label{eq:wlocal-entrydiff}
    \widetilde e_{ij}^{(W)}-\widetilde e_{ij}^{(\star)}
    &=
    \frac{(Y_W-Y_\star)_{ij}}{\widetilde d_j-\widetilde d_i}
    + (Y_\star)_{ij}
    \left(
        \frac{1}{\widetilde d_j-\widetilde d_i}
        - \frac{1}{\widetilde d_{\star,j}-\widetilde d_{\star,i}}
    \right).
\end{align}
Using (1) and
\begin{align}
    \label{eq:wlocal-gapdiff}
    \bigl| \{\widetilde d_j-\widetilde d_i\}
    - \{\widetilde d_{\star,j}-\widetilde d_{\star,i}\}\bigr|
    \le 2\eta,
\end{align}
we obtain
\begin{align}
    \label{eq:wlocal-entrybound}
    |\widetilde e_{ij}^{(W)}-\widetilde e_{ij}^{(\star)}|
    \le \frac{2}{\gamma}|(Y_W-Y_\star)_{ij}| + \frac{4\eta}{\gamma^2}|(Y_\star)_{ij}|,
\end{align}
and squaring and summing gives (2).

Let
\begin{align}
    \label{eq:wlocal-S}
    S := \widehat V^{-1}(A\widetilde V-\widetilde V\widetilde D).
\end{align}
Since $A\widehat V=\widehat V(\widehat D+Y_\star)$ and the algorithm sets $\widetilde D=\widehat D+\mathrm{diag}(Y_W)$,
\begin{align}
    \label{eq:wlocal-Sexpand}
    S = Y_\star - \mathrm{diag}(Y_W) + \widehat D \widetilde E_W - \widetilde E_W\widetilde D + Y_\star \widetilde E_W.
\end{align}
From the definition of $\widetilde E_W$,
\begin{align}
    \label{eq:wlocal-cancel}
    \widehat D \widetilde E_W-\widetilde E_W\widetilde D
    = -\{Y_W-\mathrm{diag}(Y_W)\}-\mathrm{diag}(Y_W)\widetilde E_W.
\end{align}
Hence
\begin{align}
    \label{eq:wlocal-Sreduced}
    S = (Y_\star-Y_W)+\{Y_\star-\mathrm{diag}(Y_W)\}\widetilde E_W.
\end{align}
Therefore,
\begin{align}
    \label{eq:wlocal-Sbound}
    \|S\|_F
    \le \eta + \|Y_\star-\mathrm{diag}(Y_W)\|_F\|\widetilde E_W\|_F.
\end{align}
Also,
\begin{align}
    \label{eq:wlocal-diagbound}
    \|Y_\star-\mathrm{diag}(Y_W)\|_F
    \le \|Y_\star\|_F+\eta,
\end{align}
and by (1),
\begin{align}
    \label{eq:wlocal-EWbound}
    \|\widetilde E_W\|_F
    \le \frac{\|Y_W\|_F}{\mathrm{sep}(\widetilde D)}
    \le \frac{2(\|Y_\star\|_F+\eta)}{\gamma}.
\end{align}
The diagonal perturbation estimate \eqref{eq:wlocal-diagdiff} yields the separation bound \eqref{eq:wlocal-sepstep}, which proves item 1. Next, \eqref{eq:wlocal-entrydiff} together with \eqref{eq:wlocal-gapdiff} gives the entrywise estimate \eqref{eq:wlocal-entrybound}, and hence \eqref{eq:wlocal-ediff}. For the residual, we introduce \eqref{eq:wlocal-S}, expand it by \eqref{eq:wlocal-Sexpand}, reduce it using \eqref{eq:wlocal-cancel} to \eqref{eq:wlocal-Sreduced}, and finally combine \eqref{eq:wlocal-Sbound}, \eqref{eq:wlocal-diagbound}, and \eqref{eq:wlocal-EWbound} to obtain \eqref{eq:wlocal-residual}. Finally, \eqref{eq:wlocal-defect} follows directly from Proposition~\ref{prop:newton} with $\widetilde E=\widetilde E_W$. The final second-order conclusion follows by substituting \eqref{eq:wlocal-eta-bound} and $\|\Delta\|_2=\mathcal{O}(\|Y_\star\|_F)$ into \eqref{eq:wlocal-residual} and \eqref{eq:wlocal-defect}.
\end{proof}

\begin{remark}
Theorem~\ref{thm:wlocal} is a local result for the refinement setting considered in this paper. It assumes that the input eigendecomposition is already sufficiently accurate to satisfy the separation condition and a small-biorthogonality-error condition, and it does not analyze the size of the attraction region. It also does not cover clustered eigenvalues.
\end{remark}

\begin{algorithm}
\caption{Iterative refinement in the left-right regime (simple eigenvalues)}\label{alg:both}
\begin{algorithmic}[1]
\Require $A\in\mathbb{C}^{n\times n}$, invertible $\widehat V\in\mathbb{C}^{n\times n}$, $\widehat W\in\mathbb{C}^{n\times n}$, diagonal $\widehat D=\mathrm{diag}(\widehat d_1,\dots,\widehat d_n)$
\Ensure Updated triplet $(\widetilde V,\widetilde W,\widetilde D)$ in the simple-eigenvalue regime; if the updated diagonal entries are not sufficiently separated, switch to Section~\ref{sec:cluster}
\State $R\gets A\widehat V-\widehat V\widehat D$
\State $Y\gets \widehat W^{\mathrm H}R$
\State $\widetilde d_i\gets \widehat d_i+y_{ii}$ for $i=1,\dots,n$
\If{$\min_{i\neq j}|\widetilde d_i-\widetilde d_j|$ is below the chosen threshold}
\State Switch to Section~\ref{sec:cluster}
\EndIf
\For{$i=1,\dots,n$}
\For{$j=1,\dots,n$}
\If{$i=j$}
\State $\widetilde e_{ij}\gets 0$
\Else
\State $\widetilde e_{ij}\gets y_{ij}/(\widetilde d_j-\widetilde d_i)$
\EndIf
\EndFor
\EndFor
\State $\widetilde E\gets [\widetilde e_{ij}]$
\State $F^{\mathrm H}\gets I-\widehat W^{\mathrm H}\widehat V-\widetilde E$
\State $\widetilde V\gets \widehat V(I+\widetilde E)$
\State $\widetilde W\gets \widehat W(I+F)$
\State $\widetilde D\gets \mathrm{diag}(\widetilde d_1,\dots,\widetilde d_n)$
\State \Return $(\widetilde V,\widetilde W,\widetilde D)$
\end{algorithmic}
\end{algorithm}

\section{Stabilization for clustered eigenvalues}\label{sec:cluster}

This section is supplementary to the simple-eigenvalue theory in Section~\ref{sec:simple}. If eigenvalues are close, the denominator in $\widetilde e_{ij}=y_{ij}/(\widetilde d_j-\widetilde d_i)$ becomes small and the componentwise correction may become unstable. The underlying reason is that for a cluster the invariant subspace is numerically more stable than any particular basis of eigenvectors \cite{kato1995perturbation,stewart1990matrix,trefethen2005spectra}.

We therefore separate the intracluster basis adjustment from the intercluster correction. Let the approximate eigenvalues $\{\widehat\lambda_i\}$ be partitioned into clusters by a threshold $\delta>0$. For a cluster $J\subset\{1,\dots,n\}$ we form the projected matrix
\begin{align}
    \label{eq:cluster-projected}
    B_J := \widehat W_J^{\mathrm H}A\widehat V_J\in\mathbb{C}^{k\times k},
\end{align}
where $\widehat V_J$ and $\widehat W_J$ are the submatrices corresponding to the cluster. If $\widehat W^{\mathrm H}\widehat V\approx I$, then $B_J$ approximates the action of $A$ on the corresponding invariant subspace. We diagonalize
\begin{align}
    \label{eq:cluster-diagonalization}
    B_J=S\Theta S^{-1}
\end{align}
and update
\begin{align}
    \label{eq:cluster-update}
    \widehat V_J \leftarrow \widehat V_JS,\qquad
    \widehat W_J \leftarrow \widehat W_JS^{-\mathrm H},\qquad
    \widehat D_J \leftarrow \Theta.
\end{align}
The construction \eqref{eq:cluster-projected}--\eqref{eq:cluster-update} isolates the basis ambiguity inside the approximate invariant subspace before any intercluster correction is attempted.

\begin{proposition}[Clusterwise re-diagonalization preserves the approximate invariant subspace]\label{prop:cluster}
\emph{Setup.}
Let $J$ be a cluster and let
\begin{align}
    \label{eq:cluster-prop-defs}
    B_J:=\widehat W_J^{\mathrm H}A\widehat V_J,\qquad
    B_J=S\Theta S^{-1},
\end{align}
where $S\in\mathbb{C}^{k\times k}$ is invertible and $\Theta$ is diagonal. Define
\begin{align}
    \label{eq:cluster-prop-update}
    \widetilde V_J := \widehat V_JS,\qquad
    \widetilde W_J := \widehat W_JS^{-\mathrm H}.
\end{align}

\emph{Claim.}
Then
\begin{align}
    \label{eq:cluster-prop-spans}
    \mathrm{span}(\widetilde V_J) = \mathrm{span}(\widehat V_J),\qquad
    \mathrm{span}(\widetilde W_J) = \mathrm{span}(\widehat W_J),
\end{align}
and
\begin{align}
    \label{eq:cluster-prop-project}
    \widetilde W_J^{\mathrm H}\widetilde V_J = S^{-1}(\widehat W_J^{\mathrm H}\widehat V_J)S,\qquad
    \widetilde W_J^{\mathrm H}A\widetilde V_J = \Theta.
\end{align}
In particular, if $\widehat W_J^{\mathrm H}\widehat V_J=I$, then the same identity holds after the update.
\end{proposition}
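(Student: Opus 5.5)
The claim is purely algebraic, so the plan is to verify each identity directly from the definitions in \eqref{eq:cluster-prop-defs}--\eqref{eq:cluster-prop-update}. The only structural fact needed is that $S$ is invertible, which also makes $S^{-\mathrm H}=(S^{-1})^{\mathrm H}=(S^{\mathrm H})^{-1}$ invertible.

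\textbf{Span equalities.} Since $\widetilde V_J=\widehat V_JS$, every column of $\widetilde V_J$ is a linear combination of columns of $\widehat V_J$. This gives $\mathrm{span}(\widetilde V_J)\subseteq\mathrm{span}(\widehat V_J)$. Conversely, $\widehat V_J=\widetilde V_JS^{-1}$ gives the reverse inclusion. The identical argument with $S^{-\mathrm H}$ and its inverse $S^{\mathrm H}$ yields $\mathrm{span}(\widetilde W_J)=\mathrm{span}(\widehat W_J)$. No rank assumption on $\widehat V_J$ or $\widehat W_J$ is needed, because only the invertibility of the right factor is used.

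\textbf{Projected identities.} First compute $\widetilde W_J^{\mathrm H}=(\widehat W_JS^{-\mathrm H})^{\mathrm H}=S^{-1}\widehat W_J^{\mathrm H}$. Then
\begin{align}
\widetilde W_J^{\mathrm H}\widetilde V_J=S^{-1}(\widehat W_J^{\mathrm H}\widehat V_J)S,
\end{align}
and
\begin{align}
\widetilde W_J^{\mathrm H}A\widetilde V_J=S^{-1}(\widehat W_J^{\mathrm H}A\widehat V_J)S=S^{-1}B_JS=S^{-1}S\Theta S^{-1}S=\Theta.
\end{align}
For the final statement, substitute $\widehat W_J^{\mathrm H}\widehat V_J=I$ into the first identity to get $S^{-1}IS=I$.

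\textbf{Main obstacle.} There is no real obstacle. The only point requiring care is the conjugate-transpose bookkeeping $(S^{-\mathrm H})^{\mathrm H}=S^{-1}$, which is exactly why the left update uses $S^{-\mathrm H}$ rather than $S$.
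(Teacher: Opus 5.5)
Your proposal is correct and follows essentially the same route as the paper: invertibility of $S$ (and hence of $S^{-\mathrm H}$) gives the span equalities, and $\widetilde W_J^{\mathrm H}=S^{-1}\widehat W_J^{\mathrm H}$ gives both projected identities by direct substitution. You spell out the two-sided inclusion and the $\widehat W_J^{\mathrm H}\widehat V_J=I$ case a little more explicitly than the paper does, but the argument is the same.
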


\begin{proof}
Since $S$ is invertible, $\widetilde V_J$ and $\widetilde W_J$ span the same column spaces as $\widehat V_J$ and $\widehat W_J$. Moreover,
\begin{align}
    \widetilde W_J^{\mathrm H}\widetilde V_J = S^{-1}\widehat W_J^{\mathrm H}\widehat V_JS,
\end{align}
and
\begin{align}
    \widetilde W_J^{\mathrm H}A\widetilde V_J
    = S^{-1}\widehat W_J^{\mathrm H}A\widehat V_JS
    = S^{-1}B_JS
    = \Theta.
\end{align}
Hence \eqref{eq:cluster-prop-spans} and \eqref{eq:cluster-prop-project} both follow directly from the basis change \eqref{eq:cluster-prop-update}.
\end{proof}

\begin{remark}
Proposition~\ref{prop:cluster} shows through \eqref{eq:cluster-prop-spans} and \eqref{eq:cluster-prop-project} that clusterwise re-diagonalization changes only the basis inside the approximate invariant subspace and does not change the subspace itself. This is why suppressing direct intracluster corrections and letting the small projected problem absorb the basis ambiguity is numerically coherent. We use this mechanism as a stabilization strategy; a complete convergence theory for the clustered case is left open.
\end{remark}

\section{Computational realization}\label{sec:impl}

\subsection{Use cases}

The choice of refinement method depends on which approximate quantities are returned by the initial eigensolver. In the exact-arithmetic formulation of Section~\ref{sec:simple}, Algorithm~\ref{alg:right} uses the inverse-based driving matrix \eqref{eq:right-driving} when only $\widehat V$ and $\widehat D$ are available. If $\widehat V$, $\widehat W$, and $\widehat D$ are all available, then one first performs a biorthogonalization preprocessing step and then applies Algorithm~\ref{alg:both}, which uses the implementable driving matrix \eqref{eq:yw-def}. In the numerical experiments, these exact operations are realized by finite-precision kernels, while the biorthogonality error $\|\widehat W^{\mathrm H}\widehat V-I\|$ is monitored simultaneously. The corresponding behaviors are shown in Figures~\ref{fig:simple} and \ref{fig:preprocess}.

\subsection{Biorthogonalization preprocessing}\label{subsec:biortho}

The $W$-method assumes $\widehat W^{\mathrm H}\approx \widehat V^{-1}$, equivalently $\widehat W^{\mathrm H}\widehat V\approx I$. In practice, the normalization returned by an eigensolver may not satisfy this relation. If
\begin{align}
    \label{eq:biortho-split}
    \widehat W^{\mathrm H}\widehat V = \Sigma+G = (I+G\Sigma^{-1})\Sigma,
\end{align}
where $\Sigma=\mathrm{diag}(\sigma_1,\dots,\sigma_n)$ and $G$ has zero diagonal, then the update
\begin{align}
    \label{eq:biortho-firstorder}
    \widehat W \leftarrow \widehat W\Sigma^{-\mathrm H}(I-\Sigma^{-\mathrm H}G^{\mathrm H})
\end{align}
is obtained from the first-order approximation
\[
(\Sigma+G)^{-\mathrm H}
=
\Sigma^{-\mathrm H}(I+G\Sigma^{-1})^{-\mathrm H}
=
\Sigma^{-\mathrm H}(I-\Sigma^{-\mathrm H}G^{\mathrm H})+\mathcal{O}(\|G\|^2).
\]
In the numerical experiments below, however, we use the exact biorthogonalization
\begin{align}
    \label{eq:biortho-exact}
    \widehat W \leftarrow \widehat W S^{-\mathrm H},\qquad S:=\widehat W^{\mathrm H}\widehat V,
\end{align}
up to roundoff, in order to make the comparisons cleaner. Thus \eqref{eq:biortho-firstorder} explains the intended correction, whereas \eqref{eq:biortho-exact} is the operation used in the experiments.

\subsection{Finite-precision realizations in the right-only regime}

In the numerical experiments, the exact quantity \eqref{eq:right-driving} from Algorithm~\ref{alg:right} is realized in several ways.
\begin{itemize}
\item \textbf{Direct solve}: solve $\widehat VY=R$ at every iteration. This is the most stable option.
\item \textbf{Updated approximate inverse}: maintain $\widehat M\approx \widehat V^{-1}$ and compute $Y=\widehat M R$. Under the update $\widehat V\leftarrow\widehat V(I+\widetilde E)$, one uses the first-order inverse update
\begin{align}
    \label{eq:inverse-update}
    \widehat M \leftarrow (I-\widetilde E)\widehat M.
\end{align}
Equation \eqref{eq:inverse-update} is the finite-precision analogue of differentiating the inverse map.
\item \textbf{Fixed inverse}: compute $\widehat M$ only once and reuse it during the refinement.
\end{itemize}
All variants have per-iteration complexity $\mathcal{O}(n^3)$, with matrix products as dominant kernels.

\section{Numerical experiments}\label{sec:experiments}

The numerical results are produced by a reproducible C implementation based on LAPACK/BLAS. All experiments were run on an Apple M4 Mac mini with 16\,GB memory under macOS 26.3.1. The code was compiled with Apple clang 17.0.0 and linked against Accelerate; OpenMP support was provided through \texttt{libomp}, and the timing runs used \texttt{OMP\_NUM\_THREADS=2} with \texttt{VECLIB\_MAXIMUM\_THREADS=1}. The experiments serve three distinct purposes. Figures~\ref{fig:simple}--\ref{fig:sensitivity} (left) test the mechanisms predicted by the simple-eigenvalue analysis, which is the main theoretical focus of the paper, especially residual reduction and sensitivity to the biorthogonality error. Figure~\ref{fig:suitesparse} supplements these controlled tests with application-derived matrices. Figures~\ref{fig:cluster}--\ref{fig:timing} document the clustered stabilization extension and the mixed-precision cost profile. Most test matrices are generated rather than drawn from application benchmarks because the purpose is to vary separation, nonnormality, and cluster-basis conditioning independently. This design isolates the mechanisms analyzed in the paper; the SuiteSparse examples are included only as robustness checks outside that controlled setting.

\subsection{Metrics}

We measure the relative residual
\begin{align}
    \label{eq:metric-residual}
    \frac{\|A\widehat V-\widehat V\widehat D\|_F}{\|A\|_F},
\end{align}
the biorthogonality error
\begin{align}
    \label{eq:metric-biorth}
    \|\widehat W^{\mathrm H}\widehat V-I\|_F,
\end{align}
and the eigenvalue consistency error
\begin{align}
    \label{eq:metric-consistency}
    \frac{\|\mathrm{diag}(\widehat W^{\mathrm H}A\widehat V)-\mathrm{diag}(\widehat D)\|_2}{\|A\|_F}.
\end{align}
The residual \eqref{eq:metric-residual} is used throughout. The latter two quantities are mainly used in the left-right regime and in the clustered experiments, where biorthogonality and projected eigenvalue consistency are part of the mechanism under study.

\subsection{Setup and precision model}

For the simple-eigenvalue and $\alpha$-sensitivity experiments, we generate
\begin{align}
    \label{eq:setup-simple-family}
    A = X\mathrm{diag}(\lambda_1,\dots,\lambda_n)X^{-1},\qquad X=I+\alpha N,
\end{align}
where $N_{ij}\sim\mathcal{N}(0,1)$. The parameter $\alpha$ controls the strength of nonnormality. In the real experiments with simple eigenvalues, the diagonal entries are uniformly spaced on $[-1,1]$, namely
\begin{align}
    \label{eq:setup-real-spectrum}
    \lambda_i = -1 + \frac{2(i-1)}{n-1},\qquad i=1,\dots,n.
\end{align}
In the complex experiment, the eigenvalues are placed on the unit circle,
\begin{align}
    \label{eq:setup-complex-spectrum}
    \lambda_i = \exp\!\left(\frac{2\pi\mathrm{i}(i-1)}{n}\right),\qquad i=1,\dots,n.
\end{align}
Unless stated otherwise, the simple-spectrum experiments use $\alpha=0.05$. The initial approximation is computed in single precision (\texttt{sgeev} for real problems and \texttt{cgeev} for complex problems) and then cast to double precision. Residual evaluation, direct solves in the right-only regime, inverse construction, biorthogonalization, and the clusterwise projected eigenproblems are all carried out in double precision. Thus the figures below correspond to a ``single-precision initial solve + double-precision refinement'' workflow, and the \textbf{SP+IR} timings in Figure~\ref{fig:timing} include the full cost of refinement.

For the clustered experiments, we deliberately construct matrices for which the naive componentwise update becomes unstable. The cluster block is generated as
\begin{align}
    \label{eq:setup-cluster-family}
    D &= \mathrm{diag}\!\left(\mu-\frac{k-1}{2}h,\dots,\mu+\frac{k-1}{2}h,\lambda_{k+1},\dots,\lambda_n\right), \\
    X &= \begin{bmatrix} B & 0 \\ 0 & I \end{bmatrix} + \eta G,\qquad B_{ij}=(1+\rho(i-1))^{j-1},
\end{align}
and we set $A=XDX^{-1}$. Here $k=6$, $\mu=0.25$, $h=3\times 10^{-7}$, $\rho=2\times 10^{-3}$, $\eta=10^{-3}$, and $G_{ij}\sim\mathcal{N}(0,1)$. The Vandermonde-type cluster basis $B$ has Frobenius-condition number about $4.0\times 10^{14}$, while the full similarity transform $X$ has condition number about $3.0\times 10^4$. This makes the initial single-precision cluster basis strongly mixed and exposes the instability of the naive update. For the cluster-conditioning study, we vary $\rho$ and report medians over three random seeds.

\subsection{Simple eigenvalues}\label{subsec:simpleexp}

Figure~\ref{fig:simple} uses the real simple-spectrum model \eqref{eq:setup-simple-family} with the spectrum \eqref{eq:setup-real-spectrum}, dimension $n=200$, parameter $\alpha=0.05$, and five refinement steps. We compare three concrete realizations: Algorithm~\ref{alg:both} in the left-right regime ($W$-method), the right-only algorithm realized with a fixed approximate inverse, and the right-only algorithm realized with a direct linear solve at each step. All three drive the residual \eqref{eq:metric-residual} to the double-precision limit within a few iterations, which is consistent with the simple-eigenvalue theory.

\begin{figure}[t]
\centering
\begin{tikzpicture}
\begin{axis}[
    width=0.92\linewidth,
    height=0.46\linewidth,
    ymode=log,
    xlabel={Iteration},
    ylabel={Relative residual},
    legend style={at={(0.98,0.98)},anchor=north east},
    xtick={0,1,2,3,4,5},
    grid=both]
\addlegendimage{blue, mark=*}
\addlegendentry{$W$-method}
\addlegendimage{red, mark=square*}
\addlegendentry{fixed inverse}
\addlegendimage{brown!70!black, mark=triangle, mark size=4pt}
\addlegendentry{direct solve}
\addplot+[brown!70!black, mark=triangle, mark size=4pt, forget plot]
table[col sep=space, comment chars={@}, x index=0, y index=3] {data/simple_convergence.dat};
\addplot+[red, mark=square*, forget plot]
table[col sep=space, comment chars={@}, x index=0, y index=2] {data/simple_convergence.dat};
\addplot+[blue, mark=*, forget plot]
table[col sep=space, comment chars={@}, x index=0, y index=1] {data/simple_convergence.dat};
\end{axis}
\end{tikzpicture}
\caption{Residual histories for the left-right $W$-method and for two right-only realizations of Algorithm~\ref{alg:right}.}
\label{fig:simple}
\end{figure}
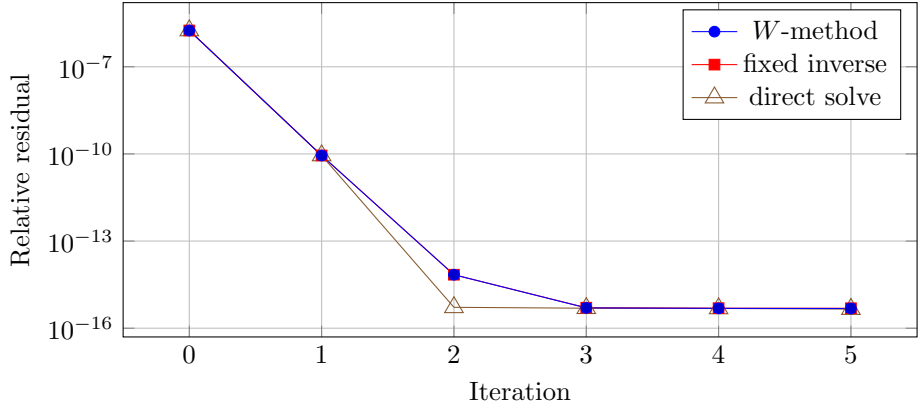

The same experiment also records the biorthogonality error \eqref{eq:metric-biorth} and the eigenvalue consistency error \eqref{eq:metric-consistency}. For the $W$-method, the biorthogonality error can increase temporarily at the first iteration because the left update first removes the leading residual component; after that, it returns to the double-precision level.

\subsection{Complex eigenvalues}

The derivation is written for complex matrices and applies directly to diagonalizable problems with complex eigenvalues. Figure~\ref{fig:complex} applies Algorithm~\ref{alg:both} to the family \eqref{eq:setup-simple-family} together with the unit-circle spectrum \eqref{eq:setup-complex-spectrum}, with dimension $n=120$, similarity amplitude $\alpha=0.05$, and five refinement steps. The residual \eqref{eq:metric-residual} decreases rapidly, and the biorthogonality error \eqref{eq:metric-biorth} again shows a transient increase followed by convergence to the rounding-error level.

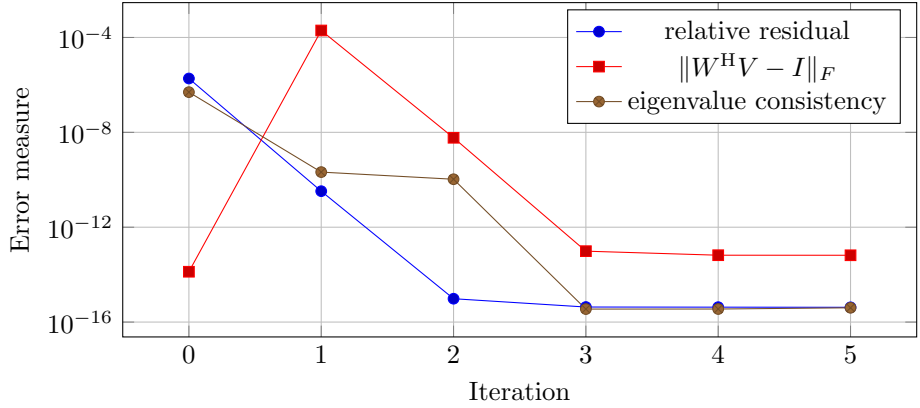
\begin{figure}[t]
\centering
\begin{tikzpicture}
\begin{axis}[
    width=0.92\linewidth,
    height=0.46\linewidth,
    ymode=log,
    xlabel={Iteration},
    ylabel={Error measure},
    legend style={at={(0.98,0.98)},anchor=north east},
    xtick={0,1,2,3,4,5},
    grid=both]
\addplot table[col sep=space, comment chars={@}, x index=0, y index=1] {data/complex_convergence.dat};
\addlegendentry{relative residual}
\addplot table[col sep=space, comment chars={@}, x index=0, y index=2] {data/complex_convergence.dat};
\addlegendentry{$\|W^{\mathrm H}V-I\|_F$}
\addplot table[col sep=space, comment chars={@}, x index=0, y index=3] {data/complex_convergence.dat};
\addlegendentry{eigenvalue consistency}
\end{axis}
\end{tikzpicture}
\caption{Refinement history for Algorithm~\ref{alg:both} on a diagonalizable matrix with complex eigenvalues.}
\label{fig:complex}
\end{figure}

\subsection{Effect of biorthogonalization preprocessing}

Figure~\ref{fig:preprocess} uses the same real test family \eqref{eq:setup-simple-family}--\eqref{eq:setup-real-spectrum} as Figure~\ref{fig:simple}, again with $n=200$, $\alpha=0.05$, and five refinement steps. Both curves use Algorithm~\ref{alg:both}; the only difference is whether the biorthogonalization preprocessing \eqref{eq:biortho-exact} is applied before refinement starts. Without preprocessing, refinement still improves the decomposition, but the early decrease in \eqref{eq:metric-residual} is slower. With preprocessing, the biorthogonality error \eqref{eq:metric-biorth} is small from the start and the convergence becomes much sharper.

\begin{figure}[t]
\centering
\begin{tikzpicture}
\begin{axis}[
    width=0.92\linewidth,
    height=0.46\linewidth,
    ymode=log,
    xlabel={Iteration},
    ylabel={Relative residual},
    legend style={at={(0.02,0.02)},anchor=south west},
    xtick={0,1,2,3,4,5},
    grid=both]
\addplot table[col sep=space, comment chars={@}, x index=0, y index=1] {data/preprocess_effect.dat};
\addlegendentry{without preprocessing}
\addplot table[col sep=space, comment chars={@}, x index=0, y index=2] {data/preprocess_effect.dat};
\addlegendentry{with biorthogonalization}
\end{axis}
\end{tikzpicture}
\caption{Effect of the biorthogonalization preprocessing on Algorithm~\ref{alg:both}.}
\label{fig:preprocess}
\end{figure}
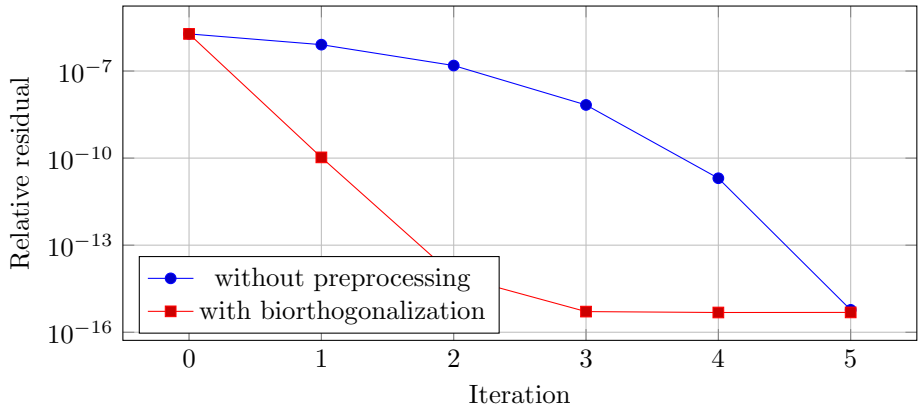

\subsection{Clustered eigenvalues}

Figure~\ref{fig:cluster} uses the clustered model \eqref{eq:setup-cluster-family} with $n=160$, cluster size $k=6$, center $\mu=0.25$, gap $h=3\times 10^{-7}$, basis parameter $\rho=2\times 10^{-3}$, coupling amplitude $\eta=10^{-3}$, and cluster threshold $\delta=10^{-6}$. We run eight refinement steps and report the residual \eqref{eq:metric-residual}. Both curves are left-right refinements: the naive variant applies the componentwise update without cluster detection, whereas the cluster-aware variant adds the projected re-diagonalization \eqref{eq:cluster-diagonalization}--\eqref{eq:cluster-update} and suppresses the direct intracluster corrections. To make the comparison fair, both variants are followed by the same exact biorthogonalization after each iteration. Even under this fair comparison, the naive residual stagnates and eventually grows from $2.1\times 10^{-6}$ to $4.9\times 10^{-1}$ after eight iterations, whereas the cluster-aware method reaches the double-precision level in about four iterations. This indicates that the instability comes from the intracluster componentwise correction itself, not merely from a loss of biorthogonality.

\begin{figure}[t]
\centering
\begin{tikzpicture}
\begin{axis}[
    width=0.92\linewidth,
    height=0.46\linewidth,
    ymode=log,
    xlabel={Iteration},
    ylabel={Relative residual},
    legend style={at={(0.02,0.02)},anchor=south west},
    xtick={0,1,2,3,4,5,6,7,8},
    grid=both]
\addplot table[col sep=space, comment chars={@}, x index=0, y index=1] {data/cluster_handling.dat};
\addlegendentry{naive}
\addplot table[col sep=space, comment chars={@}, x index=0, y index=2] {data/cluster_handling.dat};
\addlegendentry{cluster-aware}
\end{axis}
\end{tikzpicture}
\caption{Comparison of two left-right refinements on the clustered test: the naive componentwise update and the cluster-aware update. Both use the same exact biorthogonalization after each iteration.}
\label{fig:cluster}
\end{figure}
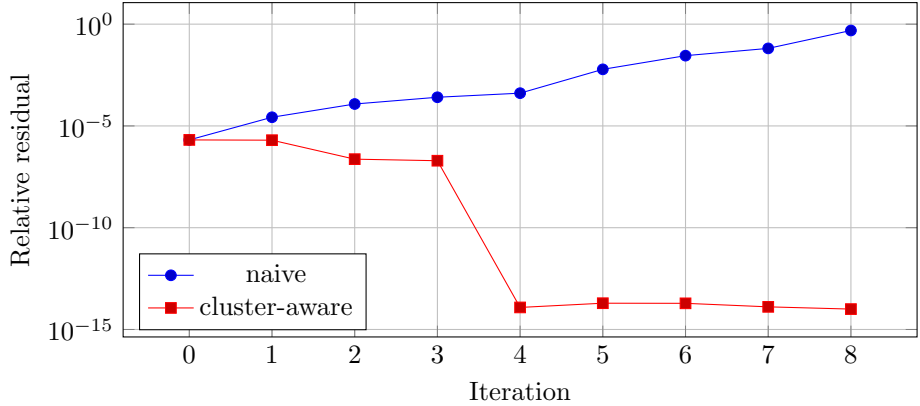

\subsection{Sensitivity to the conditioning of the cluster basis}

Figure~\ref{fig:clustercond} uses the same clustered model \eqref{eq:setup-cluster-family} with $n=160$, $k=6$, $h=3\times 10^{-7}$, $\eta=10^{-3}$, and $\delta=10^{-6}$, but now varies the basis parameter $\rho\in\{5\times 10^{-4},10^{-3},2\times 10^{-3},5\times 10^{-3},10^{-2}\}$. The horizontal axis is the Frobenius-condition number of $B$, and the vertical axis is the residual \eqref{eq:metric-residual} after four iterations. Each point is the median over three random seeds. Both curves again use the left-right refinement, with or without the cluster-aware modification. The naive method is highly sensitive to the conditioning of the cluster basis, while the cluster-aware method remains close to $10^{-14}$ over a wide range. This confirms that the previous experiment is not an isolated extreme example.

\begin{figure}[t]
\centering
\begin{tikzpicture}
\begin{axis}[
    width=0.92\linewidth,
    height=0.46\linewidth,
    xmode=log,
    ymode=log,
    xlabel={$\kappa_F(B)$},
    ylabel={Residual after 4 iterations},
    legend style={at={(0.5,-0.24)},anchor=north,legend columns=2,font=\scriptsize},
    grid=both]
\addplot table[col sep=space, comment chars={@}, x index=1, y index=2] {data/cluster_condition_sensitivity.dat};
\addlegendentry{naive}
\addplot table[col sep=space, comment chars={@}, x index=1, y index=3] {data/cluster_condition_sensitivity.dat};
\addlegendentry{cluster-aware}
\end{axis}
\end{tikzpicture}
\caption{Sensitivity of the naive and cluster-aware updates to the conditioning of the cluster basis.}
\label{fig:clustercond}
\end{figure}
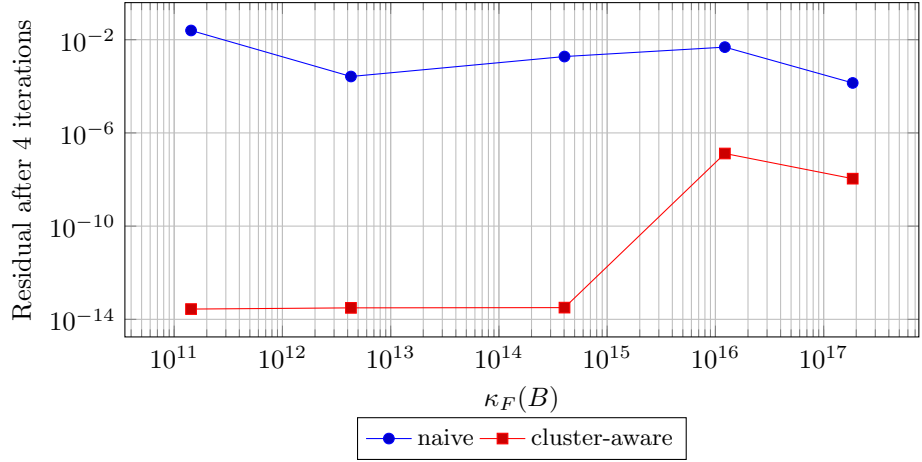

\subsection{Parameter sensitivity}

Figure~\ref{fig:sensitivity} summarizes two parameter studies. On the left, we return to the real simple-spectrum family with $n=200$ and vary $\alpha$ according to
\begin{align}
    \label{eq:alpha-grid}
    \alpha\in\{10^{-2},\,3\times 10^{-2},\,5\times 10^{-2},\,10^{-1},\,2\times 10^{-1}\},
\end{align}
reporting the residual \eqref{eq:metric-residual} after five refinement steps. The two methods in the left panel are the left-right $W$-method and the right-only direct-solve realization of Algorithm~\ref{alg:right}. Across the tested grid \eqref{eq:alpha-grid}, both methods reach the double-precision level after five steps. The final residual therefore remains small even when $\alpha$ increases to $0.2$, although the biorthogonality error of the $W$-method grows with $\alpha$ in the background. On the right, we use the clustered matrix \eqref{eq:setup-cluster-family} with $n=160$ and vary $\delta$ according to
\begin{align}
    \label{eq:delta-grid}
    \delta\in\{3\times 10^{-8},\,10^{-7},\,2\times 10^{-7},\,5\times 10^{-7},\,10^{-6},\,3\times 10^{-6}\},
\end{align}
again reporting the residual \eqref{eq:metric-residual} after four refinement steps. The parameter grids are therefore given explicitly by \eqref{eq:alpha-grid} and \eqref{eq:delta-grid}. If $\delta$ is too small, the full cluster is not detected and the cluster-aware method behaves essentially like the naive method. Once $\delta$ is large enough to capture the whole cluster, stable convergence is recovered.

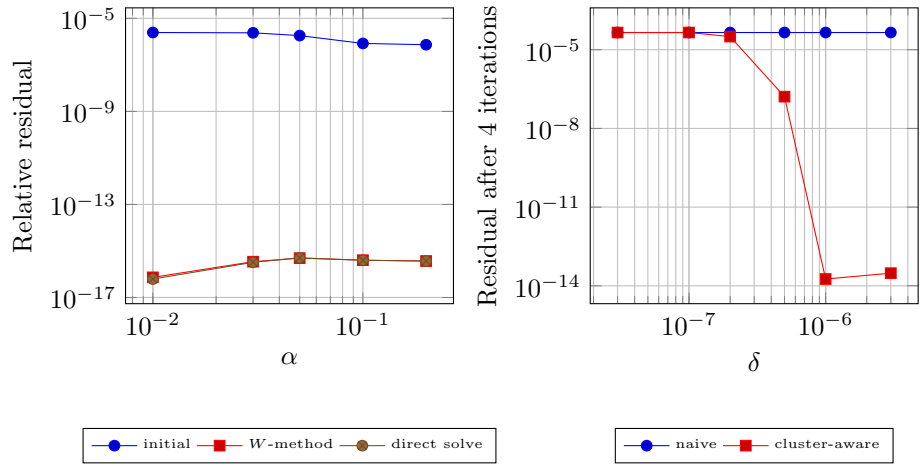
\begin{figure}[t]
\centering
\begin{tikzpicture}
\begin{groupplot}[
    group style={group size=2 by 1, horizontal sep=1.8cm},
    width=0.33\linewidth,
    height=0.30\linewidth,
    scale only axis,
    xmode=log,
    ymode=log]
\nextgroupplot[
    xlabel={$\alpha$},
    ylabel={Relative residual},
    legend style={at={(0.5,-0.42)},anchor=north,font=\tiny},
    legend columns=3,
    grid=both]
\addplot table[col sep=space, comment chars={@}, x index=0, y index=1] {data/alpha_sensitivity.dat};
\addlegendentry{initial}
\addplot table[col sep=space, comment chars={@}, x index=0, y index=2] {data/alpha_sensitivity.dat};
\addlegendentry{$W$-method}
\addplot table[col sep=space, comment chars={@}, x index=0, y index=3] {data/alpha_sensitivity.dat};
\addlegendentry{direct solve}

\nextgroupplot[
    xmode=log,
    ymode=log,
    xlabel={$\delta$},
    ylabel={Residual after 4 iterations},
    legend style={at={(0.5,-0.42)},anchor=north,legend columns=2,font=\tiny},
    grid=both]
\addplot table[col sep=space, comment chars={@}, x index=0, y index=1] {data/delta_sensitivity.dat};
\addlegendentry{naive}
\addplot table[col sep=space, comment chars={@}, x index=0, y index=2] {data/delta_sensitivity.dat};
\addlegendentry{cluster-aware}
\end{groupplot}
\end{tikzpicture}
\caption{Left: sensitivity to the nonnormality parameter $\alpha$. Right: sensitivity to the cluster threshold $\delta$.}
\label{fig:sensitivity}
\end{figure}

\subsection{Application-derived dense benchmarks}

To complement the generated matrices above, we tested dense copies of application-derived matrices from the SuiteSparse Matrix Collection \cite{davis2011university}. The candidate set consisted of Grund/\texttt{d\_ss}, HB/\texttt{fs\_183\_6}, HB/\texttt{fs\_541\_2}, HB/\texttt{shl\_200}, and HB/\texttt{bp\_400}. Since the present paper studies post-processing of an already accurate eigendecomposition, we retained only matrices for which a dense double-precision eigensolve of the complexified matrix produced the relative residual \eqref{eq:metric-residual} below $10^{-8}$. This left four matrices: \texttt{d\_ss} $(n=53,\ \mathrm{nnz}=149)$, \texttt{fs\_541\_2} $(n=541,\ \mathrm{nnz}=4285)$, \texttt{shl\_200} $(n=663,\ \mathrm{nnz}=1726)$, and \texttt{bp\_400} $(n=822,\ \mathrm{nnz}=4028)$. Each real matrix is converted to a dense complex matrix so that complex eigenpairs are treated uniformly. These tests are not intended to assess sparse scalability; they are robustness checks for the dense post-processing procedure studied here.

For each retained benchmark, the right-only refinement is started directly from the single-precision eigendecomposition, that is, without additional preprocessing. The left-right benchmark starts with one exact biorthogonalization \eqref{eq:biortho-exact}. If the initial approximate eigenvalues contain a cluster with threshold $\delta_{\mathrm{bench}}=10^{-5}$, then one clusterwise re-diagonalization preprocessing step is applied before the first $W$-iteration; on the present benchmark set this additional preprocessing is triggered only for \texttt{shl\_200}. Both methods are then iterated until the residual \eqref{eq:metric-residual} becomes smaller than the dense double-precision residual $r_{\mathrm{DP}}$, or until ten refinement steps have been completed.

Table~\ref{tab:suitesparse-status} reports the resulting iteration counts and stopping outcomes. The right-only refinement reaches the double-precision residual level within ten steps on \texttt{d\_ss}, \texttt{shl\_200}, and \texttt{bp\_400}, requiring two, two, and three steps, respectively. The left-right method reaches the same target on \texttt{d\_ss}, \texttt{shl\_200}, and \texttt{bp\_400} in three, three, and four steps, respectively. On \texttt{fs\_541\_2}, neither method reaches the double-precision baseline within ten steps; the right-only residual grows to $2.96\times 10^{20}$ and the $W$-method produces nonfinite values before the iteration limit is reached. A separate matching test between the single-precision and double-precision eigendecompositions shows that the initial single-precision right-eigenvector matrix is already too inaccurate for the refinement assumptions used in this paper: after matching eigenpairs by eigenvalue and scaling each vector optimally, the Frobenius relative error is $5.76\times 10^{-1}$ and the worst matched column has relative error of order one. Figure~\ref{fig:suitesparse} therefore omits the stopped refinement residuals for \texttt{fs\_541\_2} and shows only the initial and double-precision reference levels. On this small benchmark set, the right-only refinement remains more robust, while the $W$-method should still be viewed as a local post-processing procedure whose success depends on the quality of the left-right initialization.

\begin{table}[t]
\centering
\caption{Stopping behavior on the retained dense SuiteSparse benchmarks.}
\label{tab:suitesparse-status}
\small
\begin{tabular}{lcc}
\hline
Matrix & right-only & $W$-method \\
\hline
\texttt{d\_ss} & converged in 2 steps & converged in 3 steps \\
\texttt{fs\_541\_2} & not converged & not converged \\
\texttt{shl\_200} & converged in 2 steps & converged in 3 steps \\
\texttt{bp\_400} & converged in 3 steps & converged in 4 steps \\
\hline
\end{tabular}
\end{table}

\begin{figure}[t]
\centering
\begin{tikzpicture}
\begin{axis}[
    xbar,
    bar width=4.5pt,
    width=0.94\linewidth,
    height=0.54\linewidth,
    xmode=log,
    x dir=reverse,
    xlabel={Relative residual},
    ylabel={SuiteSparse matrix},
    ytick={1,2,3,4},
    yticklabels={d\_ss,fs\_541\_2,shl\_200,bp\_400},
    y dir=reverse,
    legend style={at={(0.5,1.02)},anchor=south,legend columns=4,font=\scriptsize},
    grid=both,
    unbounded coords=discard,
    enlarge y limits=0.18]
\addplot table[col sep=space, comment chars={@}, x index=4, y expr=\coordindex+1] {data/suitesparse_benchmarks.dat};
\addlegendentry{SP initial}
\addplot table[col sep=space, comment chars={@}, x expr={\thisrowno{0}==2 ? nan : \thisrowno{5}}, y expr=\coordindex+1] {data/suitesparse_benchmarks.dat};
\addlegendentry{right-only}
\addplot table[col sep=space, comment chars={@}, x expr={\thisrowno{0}==2 ? nan : \thisrowno{6}}, y expr=\coordindex+1] {data/suitesparse_benchmarks.dat};
\addlegendentry{$W$-method}
\addplot table[col sep=space, comment chars={@}, x index=7, y expr=\coordindex+1] {data/suitesparse_benchmarks.dat};
\addlegendentry{DP baseline}
\end{axis}
\end{tikzpicture}
\caption{Residual levels on retained dense copies of application-derived SuiteSparse matrices. For \texttt{fs\_541\_2}, the refinement bars are omitted because both refinements fail from an initialization that is already too inaccurate for iterative refinement to be meaningful.}
\label{fig:suitesparse}
\end{figure}
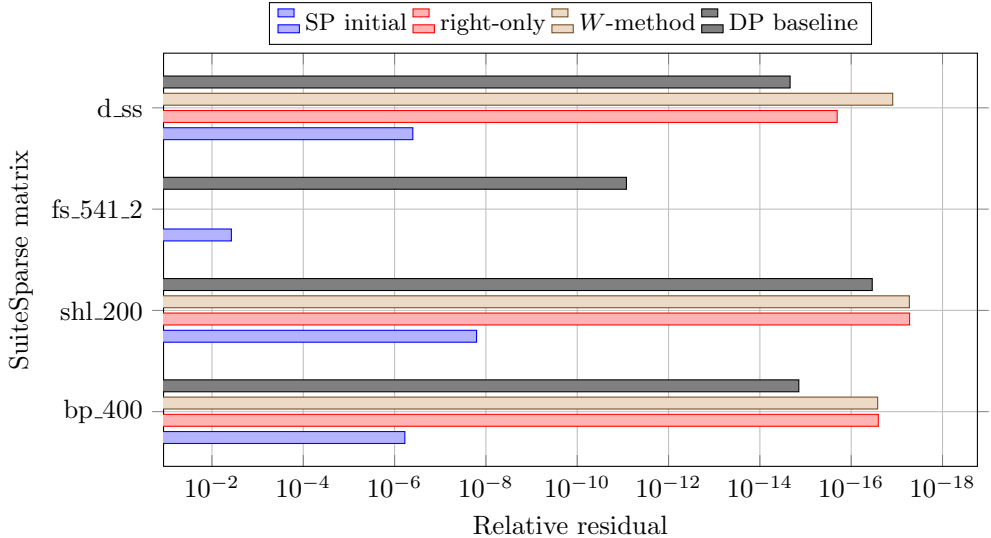

\subsection{Timing and mixed precision}

Finally, we compare the total running time of three workflows: a full double-precision eigensolve (\textbf{DP}), a full single-precision eigensolve (\textbf{SP}), and a single-precision eigensolve followed by iterative refinement in double precision (\textbf{SP+IR}). The timing matrices belong to the same real simple-spectrum family with $\alpha=0.05$, and we test the dimensions $n\in\{400,800,1600,2400,3200\}$. Measurements are obtained from the C implementation with \texttt{OMP\_NUM\_THREADS=2} and \texttt{VECLIB\_MAXIMUM\_THREADS=1}; each timing is the minimum over three runs. In the right-only case, \textbf{SP+IR} uses two right-only refinement steps with direct solves. In the left-right case, \textbf{SP+IR} uses exact biorthogonalization followed by two $W$-method steps. The total cost of preprocessing is included in \textbf{SP+IR}. These dimensions are not close to the storage limit of a 16\,GB machine for dense matrices; in this setting, repeated full eigensolves become runtime-limited before they become memory-limited. We therefore extend the curves to the largest dimension that remained practical for the full repeated workflow in the present environment. At $n=3200$, \textbf{SP+IR} is about $35\%$ faster than \textbf{DP} in the right-only regime and about $31\%$ faster in the left-right regime, while the residual after refinement remains comparable to the double-precision baseline.

\begin{figure}[t]
\centering
\begin{minipage}[t]{0.46\linewidth}
\vspace{0pt}
\centering
\begin{tikzpicture}
\begin{axis}[
    width=\linewidth,
    height=0.8\linewidth,
    xmode=log,
    ymode=log,
    xlabel={Dimension $n$},
    ylabel={Total time [s]},
    legend style={at={(0.02,0.98)},anchor=north west,font=\scriptsize},
    grid=both]
\addplot table[col sep=space, comment chars={@}, x index=0, y index=1] {data/mixed_precision_timing.dat};
\addlegendentry{DP}
\addplot table[col sep=space, comment chars={@}, x index=0, y index=2] {data/mixed_precision_timing.dat};
\addlegendentry{SP}
\addplot table[col sep=space, comment chars={@}, x index=0, y index=3] {data/mixed_precision_timing.dat};
\addlegendentry{SP+IR}
\end{axis}
\end{tikzpicture}

\small Right-only regime
\end{minipage}\hfill
\begin{minipage}[t]{0.46\linewidth}
\vspace{0pt}
\centering
\begin{tikzpicture}
\begin{axis}[
    width=\linewidth,
    height=0.8\linewidth,
    xmode=log,
    ymode=log,
    xlabel={Dimension $n$},
    ylabel={Total time [s]},
    legend style={at={(0.02,0.98)},anchor=north west,font=\scriptsize},
    grid=both]
\addplot table[col sep=space, comment chars={@}, x index=0, y index=4] {data/mixed_precision_timing.dat};
\addlegendentry{DP}
\addplot table[col sep=space, comment chars={@}, x index=0, y index=5] {data/mixed_precision_timing.dat};
\addlegendentry{SP}
\addplot table[col sep=space, comment chars={@}, x index=0, y index=6] {data/mixed_precision_timing.dat};
\addlegendentry{SP+IR}
\end{axis}
\end{tikzpicture}

\small Left-right regime
\end{minipage}
\caption{Total running time for double precision, single precision, and single precision followed by refinement; the right panel uses two $W$-method steps after exact biorthogonalization.}
\label{fig:timing}
\end{figure}

\section{Conclusion}\label{sec:conclusion}

This paper develops iterative refinement for diagonalizable non-Hermitian eigendecompositions in two regimes determined by the available input data. The main theory concerns simple eigenvalues. In the right-only regime, the update selected by the first-order derivation satisfies an exact residual identity and a quadratic residual bound. In the left-right regime, the computable driving matrix is an exact perturbation of the inverse-based one, the biorthogonality correction satisfies an exact Newton--Schulz-type error identity, and these two facts yield a local second-order estimate for the $W$-method.

The numerical experiments are organized to mirror this division. For simple eigenvalues, the observed behavior is consistent with the local analysis. On dense copies of application-derived SuiteSparse matrices, the right-only refinement reaches the dense double-precision residual level within ten steps on three of the four retained cases, whereas the left-right method requires one additional step on the successful cases and does not reach that target on \texttt{fs\_541\_2}. That example also shows that the method is not designed to recover from an initialization that already violates the basic refinement assumption. For clustered eigenvalues, the projected re-diagonalization strategy stabilizes cases in which the naive componentwise update stagnates or diverges. The mixed-precision timings indicate that a single-precision eigensolve followed by double-precision refinement can be cheaper than a full double-precision eigensolve while delivering comparable residuals.

The scope of the analysis is deliberately limited. The theory is local and restricted to simple eigenvalues, and the paper treats refinement as post-processing for an already accurate eigendecomposition. It does not analyze the size of the admissible neighborhood of initial approximations, and the clustered treatment is presented only as a stabilization extension rather than as a completed convergence theory. The experiments still rely mainly on generated matrices designed to isolate specific numerical effects, while the application-derived tests serve only as an initial robustness check. Broader validation on larger and more diverse benchmark sets remains a natural next step.

\backmatter

\bmhead{Acknowledgements}

This work was supported by JSPS KAKENHI Grant Number 25H00449.







\bibliography{references}

\end{document}